\newcommand\ifwithlilypond[2]{#2}
\newcommand\ifwithps[2]{#2}
\newtheorem{theorem}{Theorem}
\newtheorem{lemma}[theorem]{Lemma}
\theoremstyle{definition}
\newtheorem{example}[theorem]{Example}
\newtheorem{remark}[theorem]{Remark}
\renewcommand\leq\leqslant
\renewcommand\geq\geqslant
\author{Maria Bras-Amorós}
\newcommand\teclablanca[2]{\filldraw[thick,black,fill=black!#2](#1,5.)[turtle={right=90,forward=2,right=90,forward=5.,right=90,forward=2,right=90,forward=5.}];}
\newcommand\teclanegra[2]{\filldraw[thick,black,fill=black!#2](#1,5.)[turtle={right=90,forward=1.6,right=90,forward=3.,right=90,forward=1.6,right=90,forward=3.}];}
\newcommand\keyboardI[8]{%
    \def\tempa{#1}%
    \def\tempb{#2}%
    \def\tempc{#3}%
    \def\tempd{#4}%
    \def\tempe{#5}%
    \def\tempf{#6}%
    \def\tempg{#7}%
    \def\temph{#8}%
}
\newcommand\keyboardII[5]{
  \noindent\resizebox{.9\textwidth}{!}{\begin{tikzpicture}
\teclablanca{0}{0}
\teclablanca{2}{0}
\teclablanca{4}{0}
\teclablanca{6}{0}
\teclablanca{8}{0}
\teclablanca{10}{0}
\teclablanca{12}{0}
\teclablanca{14}{0}

\teclanegra{1.2}{20}
\teclanegra{3.2}{20}
\teclanegra{7.2}{20}
\teclanegra{9.2}{20}
\teclanegra{11.2}{20}

\node at (1,1.) {\large  \tempa};
\node at (2.,3.) {{\large \tempb}};
\node at (3,1.) {\large \tempc};
\node at (4.,3.) {{\large \tempd}};
\node at (5.,1.) {\large \tempe};
\node at (7.,1.) {\large \tempf};
\node at (8.,3.) {{\large \tempg}};
\node at (9.,1.) {\large \temph};
\node at (10.,3.) {{\large #1}};
\node at (11.,1.) {\large #2};
\node at (12.,3.) {{\large #3}};
\node at (13.,1.) {\large #4};
\node at (15.,1.) {\large #5};
\end{tikzpicture}}}
\date{\today}
\title{Increasingly Enumerable Submonoids of ${\mathbb R}$: \\Music Theory as a Unifying Theme}
\begin{document}
\maketitle

\begin{abstract}
We analyze the set of increasingly enumerable additive submonoids of ${\mathbb R}$,
for instance, the set of logarithms of the positive integers with respect to a given base. We call them {\em $\omega$-monoids}. The $\omega$-monoids for which consecutive elements become arbitrarily close are called {\em tempered monoids}. This is, in particular, the case for the set of logarithms.
We show that any $\omega$-monoid is either a scalar multiple of a numerical semigroup or a tempered monoid. We will also show how we can differentiate $\omega$-monoids that are multiples of numerical semigroups from those that are tempered monoids by the size and commensurability of their minimal generating sets.
All the definitions and results are illustrated with examples from music theory.
\end{abstract}


\bigskip

\section{Introduction.}

Music scales are the (ordered) sets of notes that properly combined, together with their octaves, give rise to musical compositions. The exact pitch of each note in the scale is defined by the {\em temperament}, and has been historically defined in mathematical terms in different ways; an important influence is the physics of sound. The references \cite{Barbourbook,HallJosic,Neuwirth,Sethares} explain the mathematics behind the most common temperaments. Ignoring the hierarchy imposed by the physical phenomena, the relationship among the notes in the scale was revisited with the dodecaphonism of Schoenberg, Berg, and Webern. See \cite{Perle} for the mathematical perspective of atonalism. Equal temperament is mandatory in this new order.

Temperament is usually based on the physical phenomenon of the presence of {\em harmonics} or {\em overtones} of a tone when this tone is played by an acoustic instrument. These harmonics are upper tones with lower intensity and their frequencies are positive integer multiples of the frequency of the tone that has been played, which is called the {\em fundamental tone}.
As an example, the harmonics corresponding to $C2$ (where $C4$ is middle $C$ and $C2$ is two octave below $C4$) are represented by increasing order of their frequency as follows.
\begin{center}
  \ifwithlilypond{\lilypondfile[staffsize=16]{whiteharmonics.ly}}{\resizebox{.6\textwidth}{!}{\includegraphics{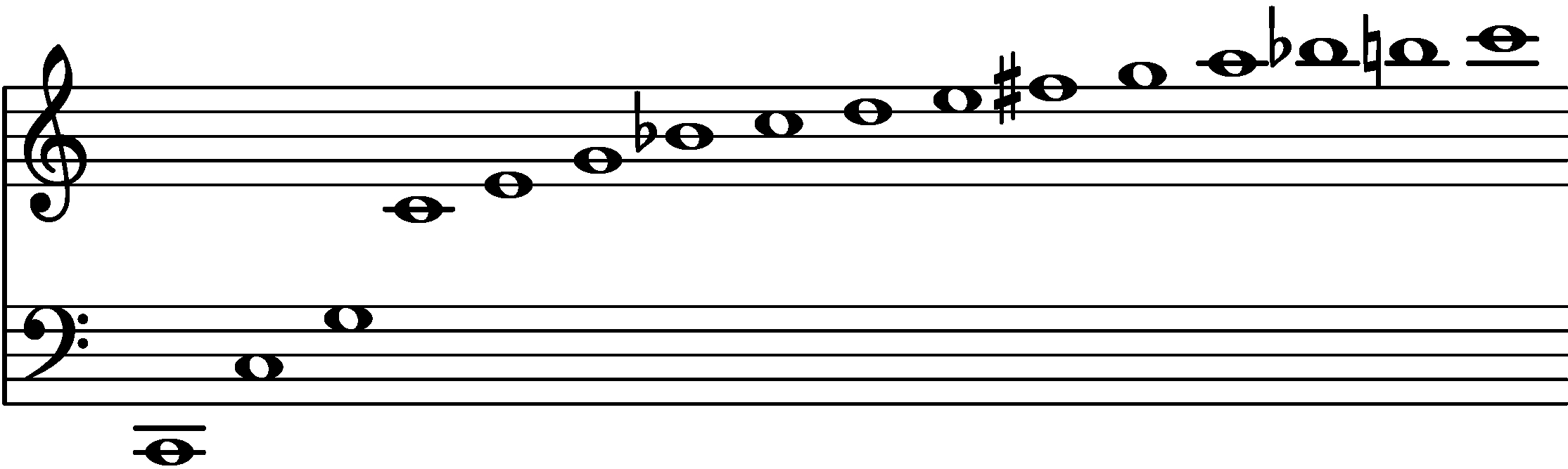}}}
\end{center}
In the definition of temperaments, the first note, known as the {\em fundamental note}, is fixed, and the pitches of the subsequent notes are defined so that the notes or their octave shifts approximate overtones of the fundamental note.

The frequencies of the physical harmonics are integer multiples of the frequency of the fundamental tone. For instance, the octave, which corresponds to the first harmonic, has frequency equal to twice the frequency of the fundamental tone.
On the other hand, the pitch distances between the physical harmonics with respect to the fundamental tone are modeled by the sequence of logarithms in a given base $L_b=\{\log_b(i): i\in{\mathbb N}\},$ where $i$ represents the frequency ratio of each harmonic with the fundamental.
The first step in the sequence, from $\log_b(1)$ to $\log_b(2)$, defines the octave interval. 
Different temperaments fix the rest of the notes of the scale in this first octave interval.
The base $b$ can be thought of as a choice of measurement unit.
For instance, we can take the base $b$ so that $L_b=\{12\log_2(i): i \in {\mathbb N}\}$; see Table~\ref{t:logtab}.
\begin{table}
  \begin{center}
    \resizebox{\textwidth}{!}{
\begin{tabular}{|r|c|c|c|}
\hline
 & \begin{tabular}{l}Relative frequencies\\ w.r.t. fundamental\end{tabular} & \begin{tabular}{l}Perfect pitch distance\\ in semitones\\ w.r.t. fundamental\end{tabular} & \begin{tabular}{l}Approximate \\pitch distance\\ in semitones\\ w.r.t. fundamental\end{tabular} \\\hline
Fundamental & $1$ & $12\log_2(1)$ & $[12\log_2(1)]=0$\\
Octave & $2$ & $12\log_2(2)$ & $[12\log_2(2)]=12$\\
Fifth of the first octave & $3$ & $12\log_2(3)$ & $[12\log_2(3)]=19$\\
Second octave & $4$ & $12\log_2(4)$ & $[12\log_2(4)]=24$\\
Third of the second octave & $5$ & $12\log_2(5)$ & $[12\log_2(5)]=28$\\
\vdots & \vdots & \vdots & \vdots \\
\hline
\end{tabular}}
\end{center}
\caption{The first harmonics in the sequence represented by their relative frequency and their pitch distance with respect to the fundamental tone, as well as their integer approximation.
}\label{t:logtab}
\end{table}
In this way, the fundamental tone is set to $12\log_2(1)=0$ units and the octave is set to $12\log_2(2)=12$ units.
\begin{center}
  \ifwithps{%
\scalebox{.5}{\keyboardI{$0$}{}{}{}{}{}{}{}{}
\keyboardII{}{}{}{}{$12$}}}{\resizebox{.55\textwidth}{!}{\includegraphics{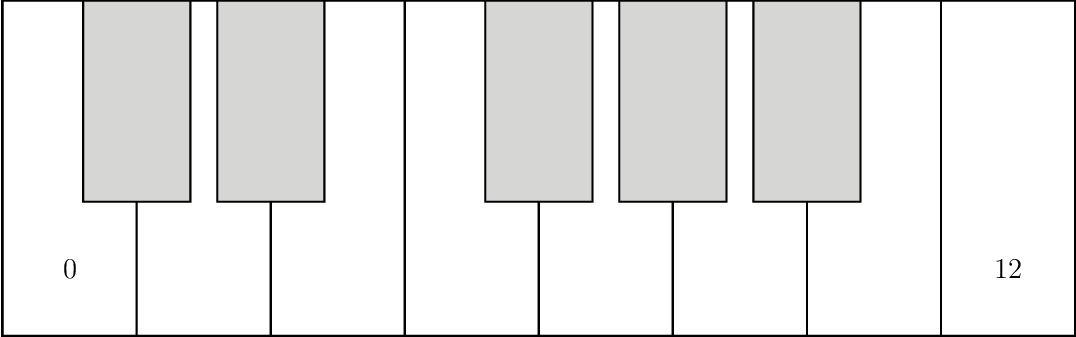}}}
\end{center}
The third note in the harmonic sequence would correspond to $12\log_2(3)=19.01955$ units. Since this is out of the range of the first octave, we reduce it to the first octave by subtracting from it one full octave, that is, subtracting $12$ units from it. Then we obtain a note which is exactly $7.01855/12$ parts of the octave. The first octave over this note will correspond to the third harmonic of the fundamental note.
This note is what is called a {\em perfect fifth}. The perfect fifth is the basis for the so-called Pythagorean tuning, explained later in Example~\ref{ex:pitagoras}.
\begin{center}
  \ifwithps{%
\scalebox{.5}{\keyboardI{$0$}{}{}{}{}{}{}{$7.01855$}{}
\keyboardII{}{}{}{}{$12$}}}{\resizebox{.55\textwidth}{!}{\includegraphics{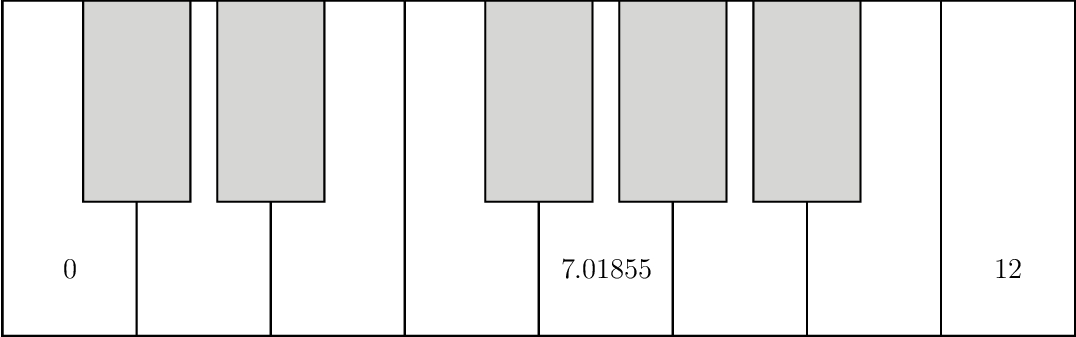}}}
\end{center}
The fourth harmonic is at $12\log_2(4)=24$ units from the fundamental, which corresponds to two exact octaves. The fifth harmonic is at $12\log_2(5)=27.86314$ units from the fundamental. We can subtract from it two octaves (equal to $24$ units) thus obtaining another note at $3.86314$ units from the fundamental. This is the {\em perfect third}.
\begin{center}
  \ifwithps{
\scalebox{.5}{\keyboardI{$0$}{}{}{}{$3.86314$}{}{}{$7.01855$}{}
\keyboardII{}{}{}{}{$12$}}}{\resizebox{.55\textwidth}{!}{\includegraphics{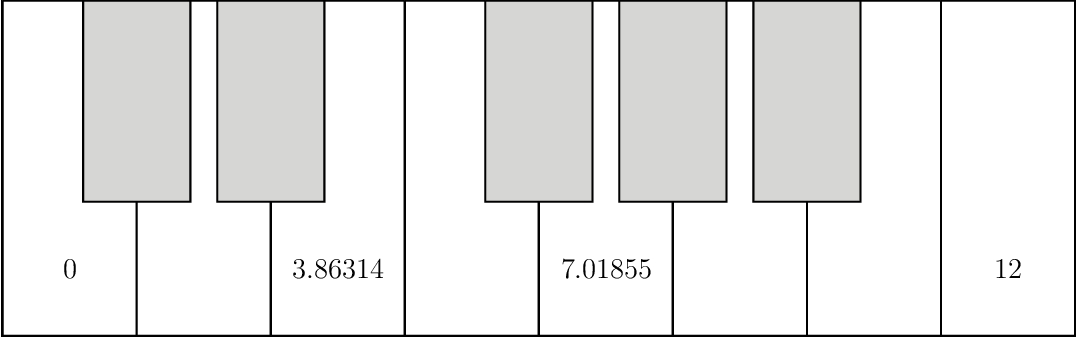}}}
\end{center}
We could continue tuning the rest of the scale in this way.
At the end, any note would be tuned as $12$ times the fractional part of $\log_2(i)$ for some $i>0$.
This perfect tuning, apart from being very difficult to obtain by hand (indeed, by ear), makes it impossible to modulate from one scale based on a fundamental note to another scaled based on another fundamental tone.

Alternatively, {\em equal temperaments} are defined by the condition that the intervals between consecutive notes are constant and, historically, the scale has been divided into $12$ equal parts or {\em semitones}. This way, taking as unit a semitone, the pitch tuning is as easy as in this diagram.
\begin{center}
\ifwithps{
\scalebox{.5}{\keyboardI{$0$}{$1$}{$2$}{$3$}{$4$}{$5$}{$6$}{$7$}
\keyboardII{$8$}{$9$}{$10$}{$11$}{$12$}}}{\resizebox{.55\textwidth}{!}{\includegraphics{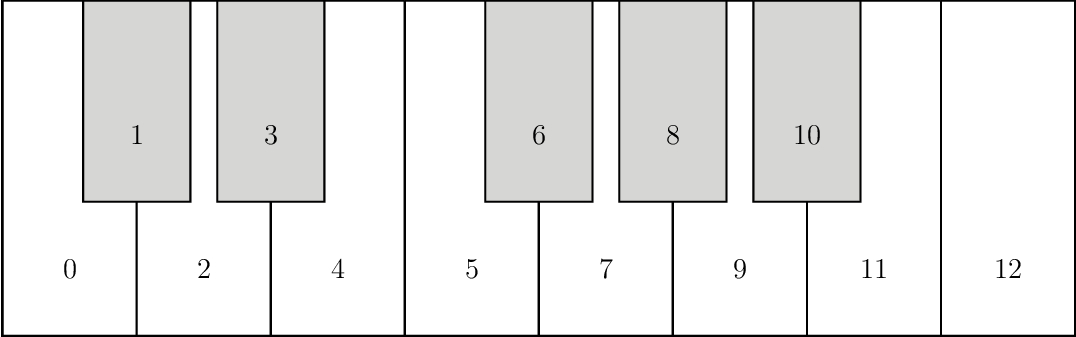}}}
\end{center}
Apart from the $12$-equal temperament, other interesting equal temperaments have been used. For instance, Jeff Harrington has several compositions in the $19$-equal temperament \cite{Harrington}, and Fabio Costa has compositions in the $31$-equal temperament \cite{Costa}, where the octave is divided respectively into $19$ and $31$ equal parts.
  
With the $12$-equal temperament, the set of distances of the harmonics from the fundamental note can be approximated as in the last column of Table~\ref{t:logtab}.
The resulting set of the approximate integer numbers of semitones in the intervals from the fundamental to each of the harmonics
\begin{figure}
  \begin{center}
    \ifwithlilypond{\lilypondfile[staffsize=16]{whiteharmonicssemitonecount.ly}}{\resizebox{.6\textwidth}{!}{\includegraphics{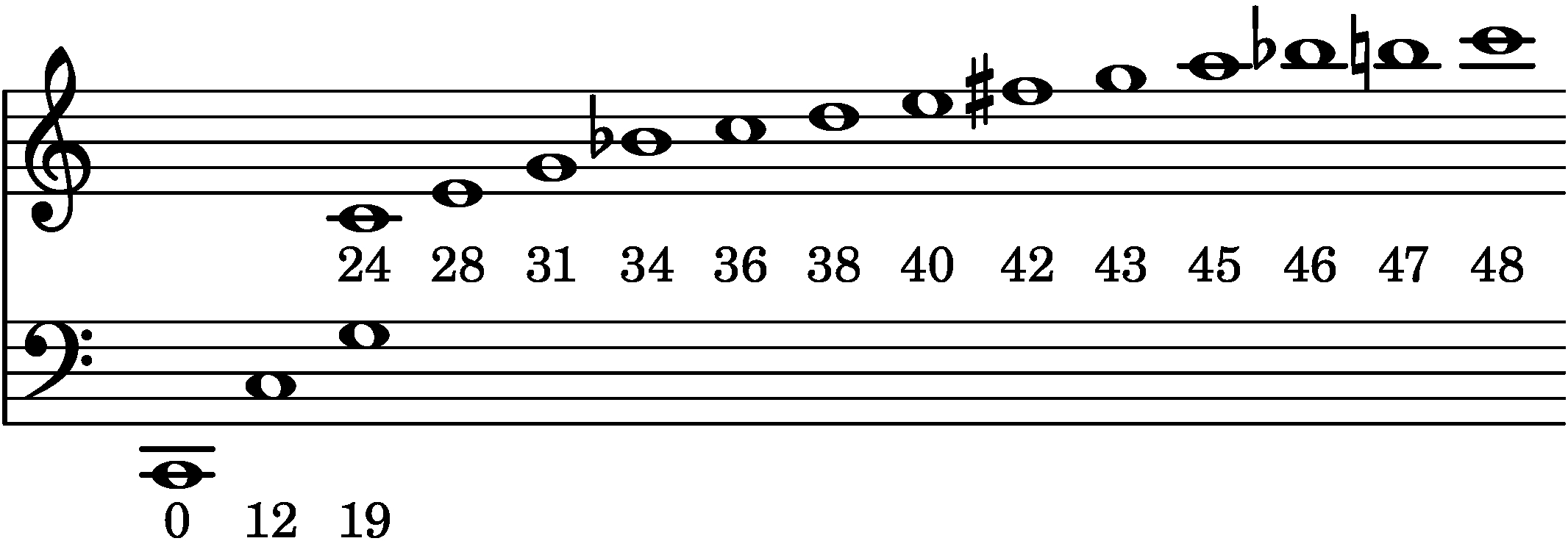}}}
    \end{center}
\caption{Approximate integer numbers of semitones in the intervals from the fundamental to each of the harmonics.}\label{f:whsc}
\end{figure}
  is $$H = \{0, 12, 19, 24, 28, 31, 34, 36, 38, 40, 42, 43, 45, 46, 47, 48\}\cup\{i\in{\mathbb N}: i>48\}.$$ See Figure~\ref{f:whsc}.

  Both the sets $L_b$ (for a fixed $b$) and $H$ are different examples of increasingly enumerable submonoids of real numbers.  
  The set $L_b$ is closed under addition since $\log_b(i)+\log_b(j)=\log_b(ij)$.
  The set $H$ is closed under addition as can be checked by hand.
  Increasingly enumerable submonoids of the real numbers are the main topic of the present article and they will be called {\em $\omega$-monoids} throughout. They are presented in Section~\ref{s:om}.

In Section~\ref{s:ns} numerical semigroups are described. They are a major example of $\omega$-monoids. The set $H$ is an example of numerical semigroup.
On the other hand, tempered monoids are another important example of $\omega$-monoids, and they are described in Section~\ref{s:tm}.
The set $L_b$ is an example of a tempered monoid.

In Section~\ref{s:main} we prove our main result. That is, we prove that any $\omega$-monoid is either a scalar multiple of a numerical semigroup or a tempered monoid. Thus the examples in Sections \ref{s:ns} and \ref{s:tm} essentially constitute the whole set of $\omega$-monoids.

In Section~\ref{s:gen} we see that $\omega$-monoids have a unique minimal generating set. We will show how we can differentiate $\omega$-monoids that are multiples of numerical semigroups from those that are tempered monoids by their minimal generating sets.

Although the new results presented here are of a strictly mathematical nature, all the definitions and results are motivated and illustrated with examples borrowed from music theory.

The connection between algebraic structures and music theory has been widely studied. Apart from \cite{Bras:MM}, to which the present article is a natural continuation, many other references can be cited. For instance, \cite{Andreatta} gives a theoretical approach to music theory from the perspective of group theory; in \cite{Amiot} there is a connection between rhythmic canons and Galois theory; and in \cite{Crans} there is a musical interpretation of the dihedral group of order $24$. From a combinatorial point of view, \cite{Clampitt} presents a connection between musical scale theory and word theory, while \cite{Lewin, Noll} analyze music intervals in terms of mathematical transformations.

\section{$\omega$-monoids.}
\label{s:om}

We denote by {\em$\omega$-monoids} the submonoids of $({\mathbb R},+)$
that are increasingly enumerable or, equivalently, the submonoids of $({\mathbb R},+)$ of order type $\omega$.

That is, the set $\Omega=\{a_0,a_1,\dots\}$ with $a_i<a_{i+1}$ for all nonnegative integers $i$ is an $\omega$-monoid if and only if
\begin{enumerate}
\item[(i)] $a_0=0$, \item[(ii)] $a_i+a_j\in \Omega$ for any pair of positive integers $i,j$.
  \end{enumerate}

There is important literature to contextualize $\omega$-monoids.
Totally ordered commutative monoids were studied by Clifford in \cite{Clifford}. As one of the reviewers of this article pointed out, infinitely generated submonoids of $({\mathbb R}^+,+)$ were crucial to finding very important examples in commutative algebra. For instance, Grams \cite{Grams} uses the submonoid of $({\mathbb R}^+,+)$ generated by the set $\{ \frac{1}{2^np_n} : n \in {\mathbb N}\}$, where $p_1, p_2,\dots$ is an increasing enumeration of the odd primes, to find the first example of an atomic integral domain failing to satisfy the ascending chain condition on principal ideals. On the other hand, in their highly cited paper \cite{Anderson}, Anderson et al. use different submonoids of $({\mathbb R}^+, +)$ to construct integral domains with certain specific factorization properties.

The connection between $\omega$-monoids and music theory was explored in \cite{Bras:MM}. The connection of other submonoids of $({\mathbb R}^+,+)$ with fuzzy logic was studied in \cite{Horcik}. The papers \cite{Gotti,GottiGotti} deal with submonoids of $({\mathbb Q},+)$ and connect them with nonunique factorization theory.

\section{The example of numerical semigroups.}\label{s:ns}
Numerical semigroups are examples of $\omega$-monoids. Let ${\mathbb N}_0={\mathbb N}\cup\{0\}$. A {\em numerical semigroup} $S$ is a subset $S$ of ${\mathbb N}_0$ satisfying
\begin{enumerate}
\item[(i)] $0\in S$, \item[(ii)] $a+b\in S$ whenever $a,b\in S$, \item[(iii)] ${\mathbb N}_0\setminus S$ is finite.
  \end{enumerate}

\begin{example}
  \label{e:hermite}
The set $$S=\{0,4,5,8,9,10\}\cup\{i\in{\mathbb N}: i\geq 12\}$$ is a numerical semigroup. 
\end{example}

\begin{example}
\label{ex:H}
The well-tempered harmonic numerical semigroup is $$H = \{0, 12, 19, 24, 28, 31, 34, 36, 38, 40, 42, 43, 45, 46, 47, \dots\}.$$
Its elements are the numbers of semitones in the intervals between a fundamental note and each of its harmonics, or overtones, when the harmonics are mapped to the equal temperament of $12$ notes per octave
(see Figure~\ref{f:whsc}).
It is easy to check that $H$ is a numerical semigroup.

\end{example}

The {\em gaps} of a numerical semigroup $S$ are the elements in
${\mathbb N}_0\setminus S$. The {\em genus} of $S$, denoted $g(S)$, is the
number of gaps of $S$. The {\em multiplicity} of $S$, denoted $m(S)$, 
is the smallest nonzero nongap of $S$. 
The gaps of the semigroup $S$ in Example~\ref{e:hermite} are
$1,2,3,6,7,11$ and the genus and multiplicity are, respectively, $g(S)=6$,
$m(S)=4$. 
The multiplicity of the semigroup $H$ in Example~\ref{ex:H} is $m(H)=12$ and its genus is $g(H)=33$.

The next result is well known in the theory of numerical semigroups and its proof is straightforward.

\begin{lemma}\label{l:semigroupgcd}
  If a subset $S$ of the nonnegative integers satisfies
  (i) $0\in S$, (ii) $a+b\in S$ whenever $a,b\in S$,
  then the set $S'=\frac{S}{\gcd(S)}$ satisfies
  (i) $0\in S'$, (ii) $a+b\in S'$ whenever $a,b\in S'$,
  (iii) ${\mathbb N}\setminus S'$ is finite.
  That is, $S'$ is a numerical semigroup.
\end{lemma}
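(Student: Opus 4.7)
The plan is to verify conditions (i), (ii), (iii) for $S'$ in turn, observing that (i) and (ii) are essentially free and that the real content is the cofiniteness (iii). Write $d=\gcd(S)$. Then $0/d=0\in S'$ gives (i); and if $a,b\in S'$, say $a=s/d$, $b=t/d$ with $s,t\in S$, then $a+b=(s+t)/d\in S'$ because $s+t\in S$ by hypothesis on $S$, giving (ii). Note also that after dividing by $d$ one has $\gcd(S')=1$.

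For (iii), the key idea is to produce two consecutive positive integers inside $S'$ and then run the standard ``two consecutive integers generate a cofinite set'' argument. Since $\gcd(S')=1$, there exists a finite subset $\{a_1,\dots,a_k\}\subseteq S'$ with $\gcd(a_1,\dots,a_k)=1$. By B\'ezout, we can write $1=\sum_i c_i a_i$ for some integers $c_i$. Grouping positive and negative coefficients, set $A=\sum_{c_i>0}c_i a_i$ and $B=\sum_{c_i<0}(-c_i)a_i$, so that $A-B=1$ and both $A,B\in S'$ by closure under addition (with $A,B\geq 0$). If $B=0$ then $1=A\in S'$, so $S'=\mathbb{N}_0$ and we are done. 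Otherwise $B\geq 1$ and $A=B+1\geq 2$ are two consecutive positive integers in $S'$.

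Now the finish: using closure under addition, for every $n\geq 0$ and every $j\in\{0,1,\dots,n\}$ the element
\[
(n-j)B+j(B+1)=nB+j
\]
lies in $S'$, so the whole block $[nB,\,nB+n]\cap\mathbb{Z}$ is contained in $S'$. Once $n\geq B$, consecutive blocks overlap, since $(n+1)B=nB+B\leq nB+n$, and hence every integer $\geq B^2$ lies in $S'$. Therefore $\mathbb{N}_0\setminus S'$ is finite, giving (iii), and $S'$ is a numerical semigroup.

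The one subtle point (and the only place requiring real work) is the production of two consecutive integers in $S'$; everything else is either direct verification or the familiar Frobenius-style packing argument. The B\'ezout step is what converts the gcd hypothesis into an additive statement that the monoid structure can use.
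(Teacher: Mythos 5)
Your proof is correct. The paper does not actually prove this lemma---it simply declares it ``well known \dots and its proof is straightforward''---and your argument (verify (i) and (ii) directly, then use B\'ezout to extract two consecutive elements $B$ and $B+1$ of $S'$ and pack the blocks $[nB,nB+n]$ to get cofiniteness) is precisely the standard proof the author is alluding to.
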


There are many open problems related to numerical semigroups. For instance,
while it is known that the sequence of the number of numerical semigroups of each given genus asymptotically behaves like the Fibonacci numbers \cite{Br:fibonacci,Zhai}, it remains open to prove that the number of numerical semigroups of genus $g$ is at most the number of numerical semigroups of genus $g+1$ for any genus \cite{Br:fibonacci}. This conjecture was announced by the author of this article in 2007 at the Thematic Seminar ``Algebraic Geometry, Coding and Computing'' held in Segovia, Spain \cite{ngu}. See \cite{Kaplan} for a nice survey on related results.

\section{The example of tempered monoids.}\label{s:tm}
 
In the context of the mathematics of music \cite{Bras:MM}, a tempered monoid is defined as the set of elements in an increasing sequence $T=\tau_0,\tau_1,\tau_2,\dots$ of ${\mathbb R}_0^+:=\{r\in{\mathbb R}: r\geq 0\}$
such that
\begin{enumerate}
\item[(i)] $\tau_0=0$,
\item[(ii)]
  $\tau_i+\tau_j\in T$ for any pair of positive integers $i,j$,
\item[(iii)] $\lim_{n\to \infty}(\tau_{n+1}-\tau_n)=0$.
  \end{enumerate}
Obviously, a tempered monoid is an $\omega$-monoid. 

If $\tau_1=1$ then we say
that the tempered monoid is {\em normalized}.
For a normalized tempered monoid $T$, we define its {\em $i$th period}, denoted $\pi_i(T)$, to be the set of elements in $T$ that are at least as large as $i$ and strictly smaller than $i+1$.
The {\em granularity} of $T$ is the cardinality of its first period.

\begin{example}
  \label{e:quarterstempered}
  The following set is a tempered monoid:
$$Q=\{0\}\cup\left\{n+ \frac{k}{2^{n+1}} :n\in {\mathbb N} \mbox{ and } 0\leq k\leq 2^{n+1}-1\right\}.$$
Its granularity is $4$. Its first period is $\{1, 1+\frac{1}{4},1+\frac{1}{2},1+\frac{3}{4}\}$.
Its second period is $\{2,2+\frac{1}{8},2+\frac{1}{4},2+\frac{3}{8},2+\frac{1}{2},2+\frac{5}{8},2+\frac{3}{4},2+\frac{7}{8}\}$,
and so on.
\end{example}

\begin{example}
\label{e:deustempered}
  The following set is a tempered monoid:
$$D=\{0\}\cup\left\{n+ \frac{k}{10^{n}} :n\in {\mathbb N} \mbox{ and } 0\leq k\leq 10^{n}-1\right\}.$$
  Its granularity is $10$ and its $i$th period has cardinality $10^i$.
\end{example}

\begin{example}[The logarithmic monoid]
  \label{ex:logtempered}
  The elements in the sequence $L=(\log_2(i+1))_{i\in {\mathbb N}_0}$ constitute another $\omega$-monoid.

A tempered monoid $T=\tau_0, \tau_1, \tau_2, \dots$ is said to be {\em product-compatible} if $\tau_{ab-1}=\tau_{a-1}+\tau_{b-1}$ for any $a,b\in{\mathbb N}$.
It is proved in \cite{Bras:MM} that the unique product-compatible tempered monoid is the logarithmic monoid.
\end{example}

\begin{example}
  [The golden fractal monoid]
\label{ex:fractempered}
We can divide a segment in a fractal way as follows. First
we halve it. Then we halve each half and so on, indefinitely.
See Figure~\ref{f:fractaldivisionofaninterval}.
The same idea can be applied by dividing the interval into two parts in
a given proportion, not necessarily into two equal parts. Next, divide each
of the parts following the same proportions as in the first cut. Divide
again each of the parts in the same proportions and so on. We obtain an
apparently chaotic but strictly fractal partition.
See Figure~\ref{f:fractaldivisionofanintervalnonbisectional}.

\begin{figure}
\centering
\begin{minipage}{.48\textwidth}
  \centering
\ifwithps{
  \psset{unit=7cm}
\resizebox{\textwidth}{!}{
\begin{tabular}{c}
\begin{pspicture}(0,0)(1,.125)
\psaxes[labels=none,ticks=none,linewidth=.005](0,0.1)(1,0.1)
\psaxes[labels=none,ticks=none,linewidth=.005](0,0.075)(0,0.125)
\put(0,.05){\makebox(0,0){$0$}}
\psaxes[labels=none,ticks=none,linewidth=.005](1,0.075)(1,0.125)
\put(1,.05){\makebox(0,0){$1$}}
\end{pspicture}
\\
\begin{pspicture}(0,0)(1,.125)
\psaxes[labels=none,ticks=none,linewidth=.005](0,0.1)(1,0.1)
\psaxes[labels=none,ticks=none,linewidth=.005](0,0.075)(0,0.125)
\put(0,.05){\makebox(0,0){$0$}}
\psaxes[labels=none,ticks=none,linewidth=.005](1,0.075)(1,0.125)
\put(1,.05){\makebox(0,0){$1$}}
\psaxes[labels=none,ticks=none,linewidth=.005](.5,0.075)(.5,0.125)
\put(.5,.05){\makebox(0,0){$.5$}}
\end{pspicture}
\\
\begin{pspicture}(0,0)(1,.125)
\psaxes[labels=none,ticks=none,linewidth=.005](0,0.1)(1,0.1)
\psaxes[labels=none,ticks=none,linewidth=.005](0,0.075)(0,0.125)
\put(0,.05){\makebox(0,0){$0$}}
\psaxes[labels=none,ticks=none,linewidth=.005](1,0.075)(1,0.125)
\put(1,.05){\makebox(0,0){$1$}}
\psaxes[labels=none,ticks=none,linewidth=.005](.5,0.075)(.5,0.125)
\put(.5,.05){\makebox(0,0){$.5$}}
\psaxes[labels=none,ticks=none,linewidth=.005](.25,0.075)(.25,0.125)
\put(.25,.05){\makebox(0,0){$.25$}}
\psaxes[labels=none,ticks=none,linewidth=.005](.75,0.075)(.75,0.125)
\put(.75,.05){\makebox(0,0){$.75$}}
\end{pspicture}
\\
\begin{pspicture}(0,0)(1,.125)
\psaxes[labels=none,ticks=none,linewidth=.005](0,0.1)(1,0.1)
\psaxes[labels=none,ticks=none,linewidth=.005](0,0.075)(0,0.125)
\put(0,.05){\makebox(0,0){$0$}}
\psaxes[labels=none,ticks=none,linewidth=.005](1,0.075)(1,0.125)
\put(1,.05){\makebox(0,0){$1$}}
\psaxes[labels=none,ticks=none,linewidth=.005](.5,0.075)(.5,0.125)
\put(.5,.05){\makebox(0,0){$.5$}}
\psaxes[labels=none,ticks=none,linewidth=.005](.25,0.075)(.25,0.125)
\put(.25,.05){\makebox(0,0){$.25$}}
\psaxes[labels=none,ticks=none,linewidth=.005](.75,0.075)(.75,0.125)
\put(.75,.05){\makebox(0,0){$.75$}}
\psaxes[labels=none,ticks=none,linewidth=.005](.125,0.075)(.125,0.125)
\put(.125,.05){\makebox(0,0){$.125$}}
\psaxes[labels=none,ticks=none,linewidth=.005](.375,0.075)(.375,0.125)
\put(.375,.05){\makebox(0,0){$.375$}}
\psaxes[labels=none,ticks=none,linewidth=.005](.625,0.075)(.625,0.125)
\put(.625,.05){\makebox(0,0){$.625$}}
\psaxes[labels=none,ticks=none,linewidth=.005](.875,0.075)(.875,0.125)
\put(.875,.05){\makebox(0,0){$.875$}}
\end{pspicture}
\\
\begin{pspicture}(0,0)(1,.125)
\psaxes[labels=none,ticks=none,linewidth=.005](0,0.1)(1,0.1)
\psaxes[labels=none,ticks=none,linewidth=.005](0,0.075)(0,0.125)
\put(0,.05){\makebox(0,0){$0$}}
\psaxes[labels=none,ticks=none,linewidth=.005](1,0.075)(1,0.125)
\put(1,.05){\makebox(0,0){$1$}}
\psaxes[labels=none,ticks=none,linewidth=.005](.5,0.075)(.5,0.125)
\psaxes[labels=none,ticks=none,linewidth=.005](.25,0.075)(.25,0.125)
\psaxes[labels=none,ticks=none,linewidth=.005](.75,0.075)(.75,0.125)
\psaxes[labels=none,ticks=none,linewidth=.005](.125,0.075)(.125,0.125)
\psaxes[labels=none,ticks=none,linewidth=.005](.375,0.075)(.375,0.125)
\psaxes[labels=none,ticks=none,linewidth=.005](.625,0.075)(.625,0.125)
\psaxes[labels=none,ticks=none,linewidth=.005](.875,0.075)(.875,0.125)
\psaxes[labels=none,ticks=none,linewidth=.005](.0625,0.075)(.0625,0.125)
\psaxes[labels=none,ticks=none,linewidth=.005](.1875,0.075)(.1875,0.125)
\psaxes[labels=none,ticks=none,linewidth=.005](.3125,0.075)(.3125,0.125)
\psaxes[labels=none,ticks=none,linewidth=.005](.4375,0.075)(.4375,0.125)
\psaxes[labels=none,ticks=none,linewidth=.005](.5625,0.075)(.5625,0.125)
\psaxes[labels=none,ticks=none,linewidth=.005](.6875,0.075)(.6875,0.125)
\psaxes[labels=none,ticks=none,linewidth=.005](.8125,0.075)(.8125,0.125)
\psaxes[labels=none,ticks=none,linewidth=.005](.9375,0.075)(.9375,0.125)
\end{pspicture}
\end{tabular}}}{\resizebox{.9\textwidth}{!}{\includegraphics{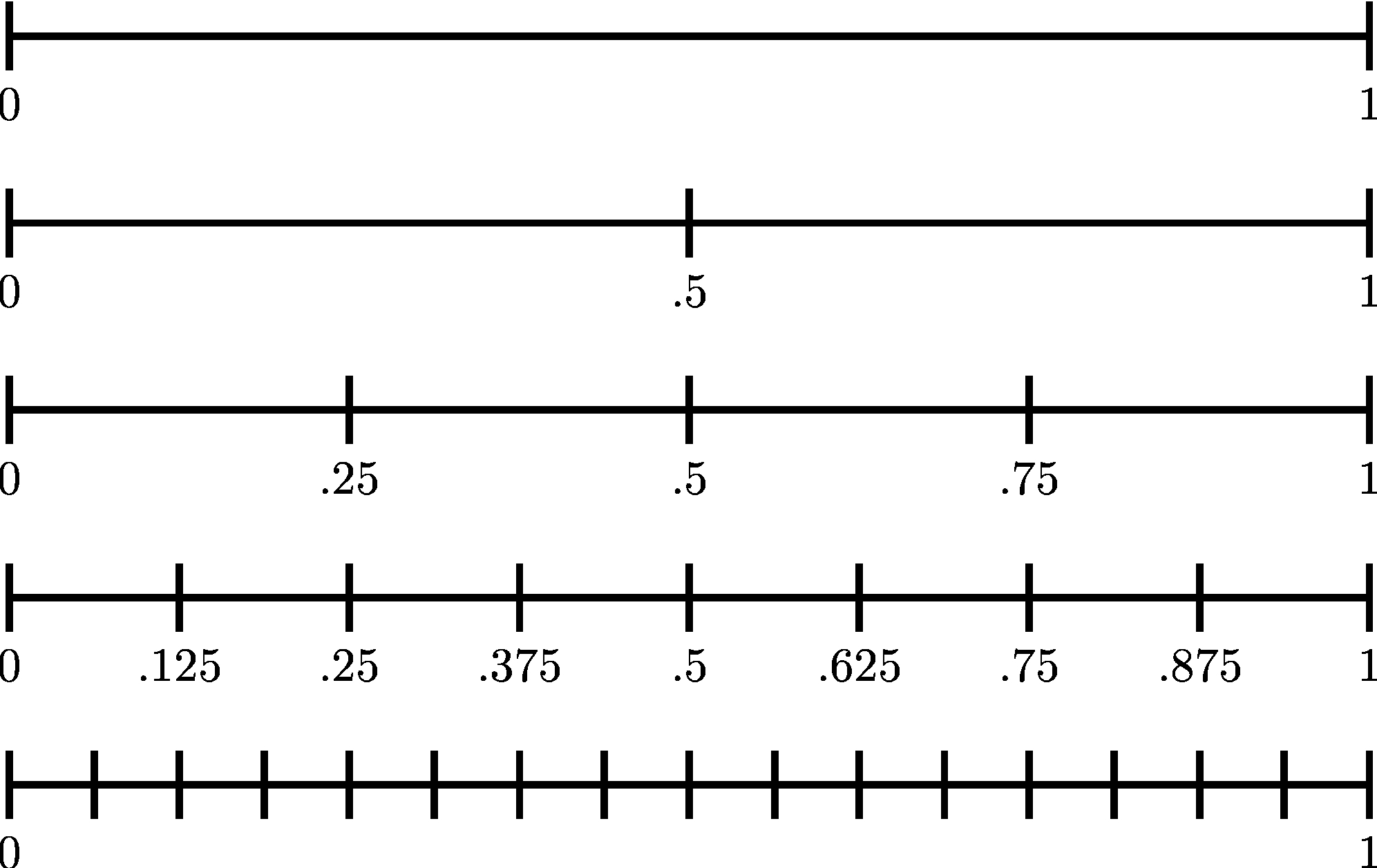}}}
\captionof{figure}{Fractal division of an interval (bisectional).}
\label{f:fractaldivisionofaninterval}
\end{minipage}
\ \ \
\begin{minipage}{.48\textwidth}
  \centering
\ifwithps{
  \psset{unit=7cm}
\resizebox{\textwidth}{!}{
\begin{tabular}{c}
\begin{pspicture}(0,0)(1,.125)
\psaxes[labels=none,ticks=none,linewidth=.005](0,0.1)(1,0.1)
\psaxes[labels=none,ticks=none,linewidth=.005](0,0.075)(0,0.125)
\put(0,.05){\makebox(0,0){$0$}}
\psaxes[labels=none,ticks=none,linewidth=.005](1,0.075)(1,0.125)
\put(1,.05){\makebox(0,0){$1$}}
\end{pspicture}
\\
\begin{pspicture}(0,0)(1,.125)
\psaxes[labels=none,ticks=none,linewidth=.005](0,0.1)(1,0.1)
\psaxes[labels=none,ticks=none,linewidth=.005](0,0.075)(0,0.125)
\put(0,.05){\makebox(0,0){$0$}}
\psaxes[labels=none,ticks=none,linewidth=.005](1,0.075)(1,0.125)
\put(1,.05){\makebox(0,0){$1$}}
\psaxes[labels=none,ticks=none,linewidth=.005](.618,0.075)(.618,0.125)
\put(.618,.05){\makebox(0,0){$p$}}
\end{pspicture}
\\
\begin{pspicture}(0,0)(1,.125)
\psaxes[labels=none,ticks=none,linewidth=.005](0,0.1)(1,0.1)
\psaxes[labels=none,ticks=none,linewidth=.005](0,0.075)(0,0.125)
\put(0,.05){\makebox(0,0){$0$}}
\psaxes[labels=none,ticks=none,linewidth=.005](1,0.075)(1,0.125)
\put(1,.05){\makebox(0,0){$1$}}
\psaxes[labels=none,ticks=none,linewidth=.005](.618,0.075)(.618,0.125)
\put(.618,.05){\makebox(0,0){$p$}}
\psaxes[labels=none,ticks=none,linewidth=.005](.382,0.075)(.382,0.125)
\put(.382,.05){\makebox(0,0){$p^2$}}
\psaxes[labels=none,ticks=none,linewidth=.005](.8541,0.075)(.8541,0.125)
\put(.8541,.05){\makebox(0,0){\scalebox{.8}{$p+(1-p)p$}}}
\end{pspicture}
\\
\begin{pspicture}(0,0)(1,.125)
\psaxes[labels=none,ticks=none,linewidth=.005](0,0.1)(1,0.1)
\psaxes[labels=none,ticks=none,linewidth=.005](0,0.075)(0,0.125)
\put(0,.05){\makebox(0,0){$0$}}
\psaxes[labels=none,ticks=none,linewidth=.005](1,0.075)(1,0.125)
\put(1,.05){\makebox(0,0){$1$}}
\psaxes[labels=none,ticks=none,linewidth=.005](.618,0.075)(.618,0.125)
\psaxes[labels=none,ticks=none,linewidth=.005](.382,0.075)(.382,0.125)
\psaxes[labels=none,ticks=none,linewidth=.005](.8541,0.075)(.8541,0.125)
\psaxes[labels=none,ticks=none,linewidth=.005](.2361,0.075)(.2361,0.125)
\psaxes[labels=none,ticks=none,linewidth=.005](.5279,0.075)(.5279,0.125)
\psaxes[labels=none,ticks=none,linewidth=.005](.7639,0.075)(.7639,0.125)
\psaxes[labels=none,ticks=none,linewidth=.005](.9443,0.075)(.9443,0.125)
\end{pspicture}
\\
\begin{pspicture}(0,0)(1,.125)
\psaxes[labels=none,ticks=none,linewidth=.005](0,0.1)(1,0.1)
\psaxes[labels=none,ticks=none,linewidth=.005](0,0.075)(0,0.125)
\put(0,.05){\makebox(0,0){$0$}}
\psaxes[labels=none,ticks=none,linewidth=.005](1,0.075)(1,0.125)
\put(1,.05){\makebox(0,0){$1$}}
\psaxes[labels=none,ticks=none,linewidth=.005](.618,0.075)(.618,0.125)
\psaxes[labels=none,ticks=none,linewidth=.005](.382,0.075)(.382,0.125)
\psaxes[labels=none,ticks=none,linewidth=.005](.8541,0.075)(.8541,0.125)
\psaxes[labels=none,ticks=none,linewidth=.005](.2361,0.075)(.2361,0.125)
\psaxes[labels=none,ticks=none,linewidth=.005](.5279,0.075)(.5279,0.125)
\psaxes[labels=none,ticks=none,linewidth=.005](.7639,0.075)(.7639,0.125)
\psaxes[labels=none,ticks=none,linewidth=.005](.9443,0.075)(.9443,0.125)
\psaxes[labels=none,ticks=none,linewidth=.005](.1459 ,0.075)(.1459,0.125)
\psaxes[labels=none,ticks=none,linewidth=.005](.3262 ,0.075)(.3262,0.125)
\psaxes[labels=none,ticks=none,linewidth=.005](.4721 ,0.075)(.4721,0.125)
\psaxes[labels=none,ticks=none,linewidth=.005](.5836,0.075)(.5836,0.125)
\psaxes[labels=none,ticks=none,linewidth=.005](.7082,0.075)(.7082,0.125)
\psaxes[labels=none,ticks=none,linewidth=.005](.8197,0.075)(.8197,0.125)
\psaxes[labels=none,ticks=none,linewidth=.005](.9098,0.075)(.9098,0.125)
\psaxes[labels=none,ticks=none,linewidth=.005](.9787,0.075)(.9787,0.125)
\end{pspicture}
\end{tabular}}}{\resizebox{.9\textwidth}{!}{\includegraphics{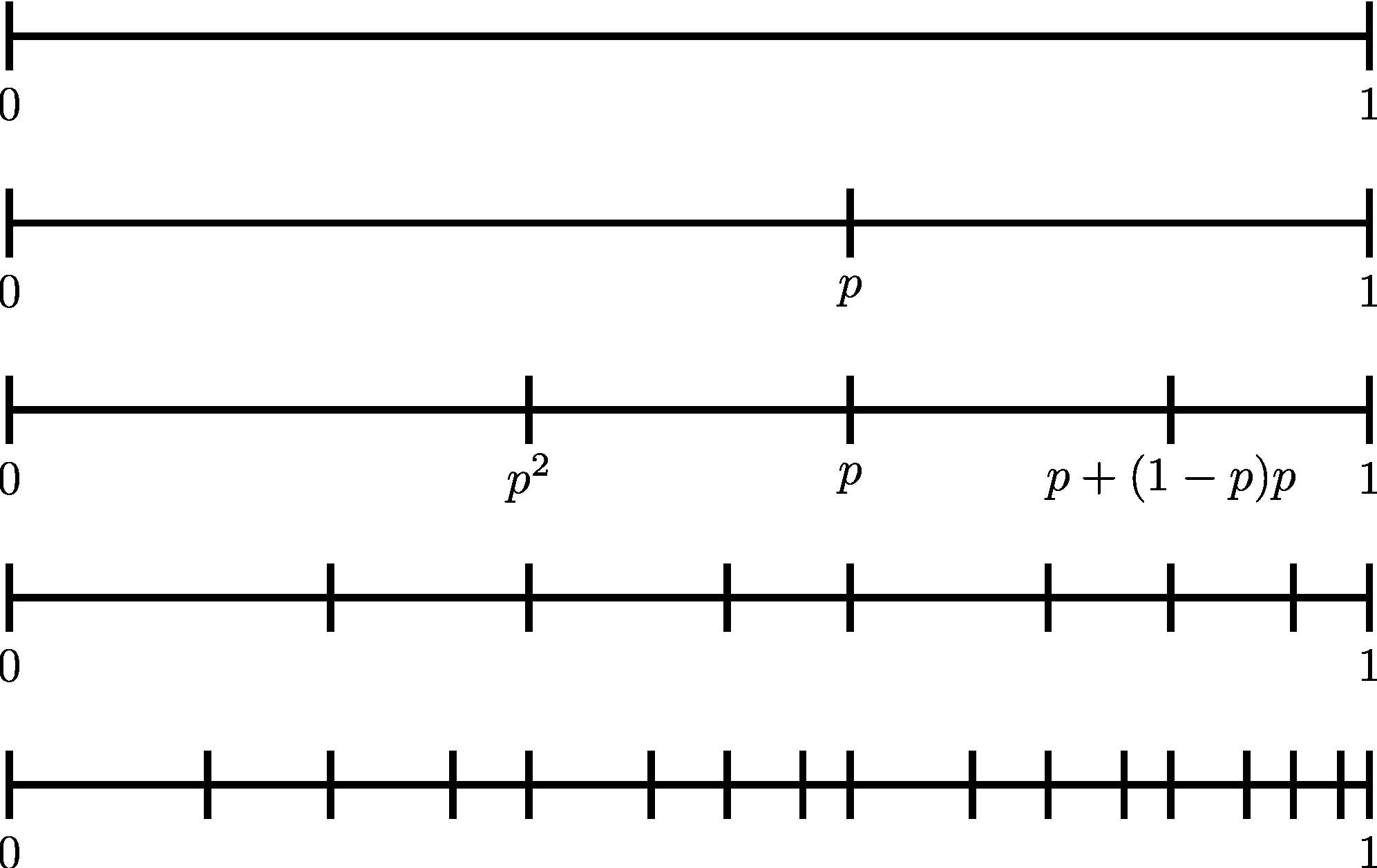}}}
\captionof{figure}{Fractal division of an interval (nonbisectional).}
\label{f:fractaldivisionofanintervalnonbisectional}
\end{minipage}
\end{figure}

A tempered monoid is said to be {\em fractal} if its consecutive periods, seen as divisions of the unit interval, follow the pattern of a fractal division of an interval.
For instance, the tempered monoid in Example~\ref{e:deustempered} is a fractal monoid.

Let $\phi=\frac{1+\sqrt{5}}{2}$ be the golden number.
It is proved in \cite{Bras:MM} that the period $\{1,\phi\}$ generates a fractal tempered monoid $F$ and that the unique nonbisectional normalized fractal monoid of granularity $2$ is exactly $F$. Hence, $F$ is called the {\em golden fractal monoid}.
 
The first (rounded) terms of $F$ are listed below:

\begin{eqnarray*}
  F&=&\{0,
  1,
  1.6180, 2, 2.3820, 2.6180, 2.8541, 3, 3.2361, 3.3820, 3.5279, 3.6180,
  \\&&3.7639, 3.8541, 3.9443, 4, 4.1459, 4.2361, 4.3262, 4.3820, 4.4721, 4.5279, \\&&4.5836, 4.6180, 4.7082, 4.7639, 4.8197, 4.8541, 4.9098, 4.9443, 4.9787, 5, \\&&5.0902, 5.1459, 5.2016, 5.2361, 5.2918, 5.3262, 5.3607, 5.3820, 5.4377, \\&&5.4721, 5.5066, 5.5279, 5.5623, 5.5836, 5.6049, 5.6180, 5.6738, 5.7082, \\&&5.7426, 5.7639, 5.7984, 5.8197, 5.8409, 5.8541, 5.8885, 5.9098, 5.9311, \\&&5.9443, 5.9656, 5.9787, 5.9919, 6, \dots \}.
\end{eqnarray*}

\end{example}

\begin{remark}
  The numerical semigroup $H$ in Example~\ref{ex:H} satisfies
  \begin{itemize}
  \item $H=\lfloor 12 L + 0.6\rfloor$,
  \item $H=\lfloor 12 F\rfloor$,
  \end{itemize}
  where $L$ is the logarithmic monoid (Example~\ref{ex:logtempered})
  and $F$ is the golden fractal monoid (Example~\ref{ex:fractempered}).
\end{remark}

\begin{example}[The Pythagorean monoid]\label{ex:pitagoras}
  The Pythagorean tuning of the musical scale is based on the perfect fifth. This means that the fifth is tuned so that its octave coincides exactly with the third harmonic in the harmonic sequence. For this reason, the frequency ratio of the fifth tone with respect to a fundamental tone is $3:2$. The ``$3$'' corresponds to the third harmonic and the ``$:2$'' corresponds to lowering one octave. Notice that by lowering one octave we have $1<\frac{3}{2}<2$.

  Alternatively, if we focus on pitch differences rather than frequency ratios, then, in the Pythagorean tuning, the pitch difference of the interval between a fundamental tone and its fifth tone is the portion of the octave given by the irrational number $\log_2{\frac{3}{2}}$.
  
The fifth of the fifth can be defined in the same terms and so on,
giving rise to the {\em circle of fifths}. The circle of fifths defines
one by one the notes of the scale tuned in the so-called Pythagorean
tuning. See Figure \ref{figpit1} and Figure \ref{figpit2}.

\begin{figure}
  \begin{center}
    \ifwithps{%
\scalebox{.7}{\keyboardI{$1$}{$\frac{3^7}{2^{11}}$}{$\frac{3^2}{2^3}$}{$\frac{3^9}{2^{14}}$}{$\frac{3^4}{2^6}$}{$\frac{3^{-1}}{2^{-2}}$}{$\frac{3^6}{2^9}$}{$\frac{3}{2}$}
\keyboardII{$\frac{3^8}{2^{12}}$}{$\frac{3^3}{2^4}$}{$\frac{3^{10}}{2^{15}}$}{$\frac{3^5}{2^7}$}{$\frac{3^0}{2^{-1}}$}}}{\resizebox{.55\textwidth}{!}{\includegraphics{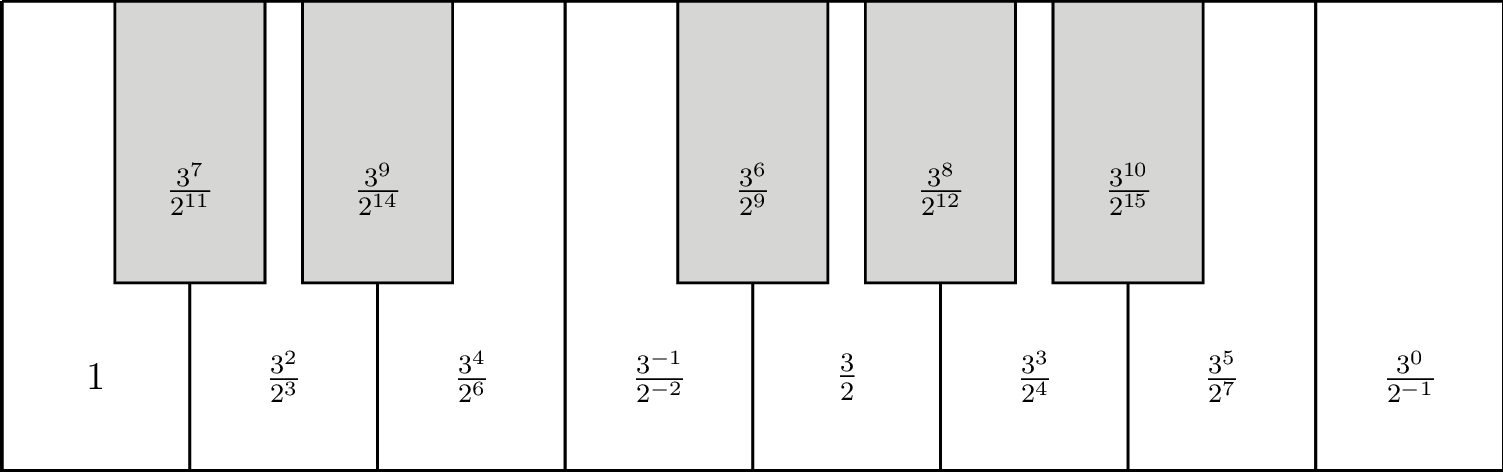}}}
\caption{Frequency ratios with respect to the fundamental (C) in
  Pythagorean tuning.}
\label{figpit1}
\end{center}
\end{figure}

\begin{figure}
  \begin{center}
    \ifwithps{%
\scalebox{.7}{\keyboardI{$0$}{$\log_2\frac{3^7}{2^{11}}$}{$\log_2\frac{3^2}{2^3}$}{$\log_2\frac{3^9}{2^{14}}$}{$\log_2\frac{3^4}{2^6}$}{$\log_2\frac{3^{-1}}{2^{-2}}$}{$\log_2\frac{3^6}{2^9}$}{$\log_2\frac{3}{2}$}
\keyboardII{$\log_2\frac{3^8}{2^{12}}$}{$\log_2\frac{3^3}{2^4}$}{$\log_2\frac{3^{10}}{2^{15}}$}{$\log_2\frac{3^5}{2^7}$}{$1$}}}{\resizebox{.55\textwidth}{!}{\includegraphics{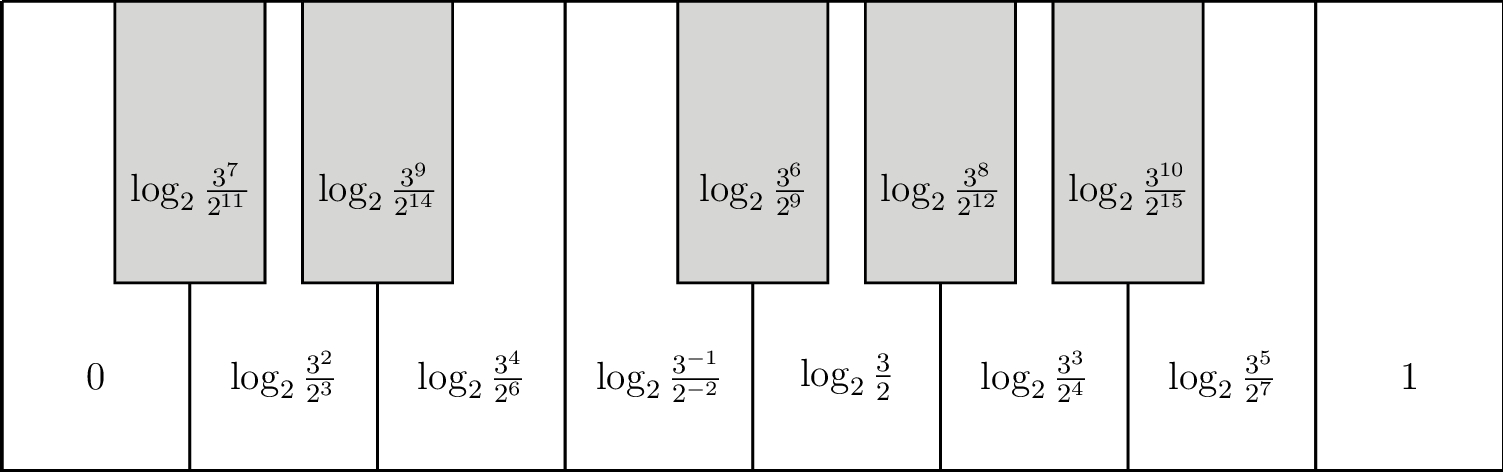}}}
    \caption{Pitch intervals with respect to the fundamental (C) in
      Pythagorean tuning.}
\label{figpit2}
\end{center}
\end{figure}

In the end, the set of pitch differences of all notes with respect to the fundamental note given in terms of portions of the octave lie in $\{\log_2{\frac{3^i}{2^j}}: i,j\in{\mathbb N}_0\}$ (except for the fourth tone, which is usually defined by the frequency ratio $3^{-1}:2^{-1}$).

In this setting, the pitches of the harmonics of a given fundamental tone are approximated by the set $P=\{i+j\log_2{3}:i,j\in{\mathbb N}_0\}$. The set $P$ is a tempered monoid which we will call the {\rm Pythagorean monoid}.
  \end{example}

\section{$\omega$-monoids, tempered monoids, and numerical semigroups.}\label{s:main}

We now state and prove our main theorem relating $\omega$-monoids, tempered monoids, and numerical semigroups.
\begin{theorem}
\label{t:main}
$\omega$-monoids are scalar multiples of either numerical semigroups or normalized tempered monoids. 
\end{theorem}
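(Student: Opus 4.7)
The plan is to dichotomize according to the structure of the subgroup $G := \Omega - \Omega$ generated by $\Omega$ inside $(\mathbb{R},+)$. Because $\Omega$ is closed under addition and contains $0$, every element of the group generated by $\Omega$ is a difference of two elements of $\Omega$. By the classical classification of subgroups of $\mathbb{R}$, either $G$ is discrete (so $G = c\mathbb{Z}$ for some $c > 0$, since $\Omega$ is infinite) or $G$ is dense in $\mathbb{R}$.

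In the discrete case $G = c\mathbb{Z}$, the inclusion $\Omega \subseteq [0,\infty)$ gives $\Omega \subseteq c\mathbb{N}_0$. The set $S := \frac{1}{c}\Omega$ is a subset of $\mathbb{N}_0$ that contains $0$, is closed under addition, and satisfies $\gcd(S) = 1$ (because $\langle S\rangle = \mathbb{Z}$). Applying Lemma~\ref{l:semigroupgcd} to $S$ yields that $S$ is a numerical semigroup, so $\Omega = cS$ is a scalar multiple of a numerical semigroup.

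In the dense case, the goal is to show $\lim_{n\to\infty}(a_{n+1}-a_n)=0$; rescaling by $1/a_1$ then exhibits $\Omega$ as a scalar multiple of a normalized tempered monoid. Given $\epsilon > 0$, use density of $G$ to pick $a,b\in\Omega$ with $0 < a - b < \epsilon$. If the only such choice forces $b=0$ (so $a$ itself is small), replace the pair by $(2a,a)$, both of which lie in $\Omega$ by closure. Writing $\delta := a - b$, the key observation is that for every $n\geq 1$ and every integer $0\leq k\leq n$, the element $ka+(n-k)b = nb + k\delta$ belongs to $\Omega$; these $n+1$ points form an arithmetic progression of common difference $\delta$ inside the interval $[nb,na]$. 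As soon as $n \geq b/\delta$, consecutive intervals $[nb,na]$ and $[(n+1)b,(n+1)a]$ overlap, so their union covers $[N,\infty)$ for some threshold $N$. Hence every sufficiently large element of $\Omega$ is followed within distance $\delta$ by another element of $\Omega$, which forces $a_{n+1} - a_n \leq \delta$ for all large $n$. Since $\epsilon$ was arbitrary, the gaps tend to zero.

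I expect the genuine difficulty to lie in the dense case. It is easy to produce a single small gap somewhere in $\Omega$ from density of $G$, but the statement demands the uniform conclusion that \emph{every} sufficiently late consecutive gap is small. The overlapping-intervals device above is what upgrades the existence of one small difference into a $\delta$-net on a cofinal portion of $\Omega$, and its validity rests on $\Omega$ being closed under all non-negative integer combinations, which is precisely the monoid axiom.
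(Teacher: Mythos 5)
Your proof is correct, but it takes a genuinely different route from the paper's. The paper proves the equivalent statement that an $\omega$-monoid that is \emph{not} tempered must be a multiple of a numerical semigroup (Lemma~\ref{l:pretheorem}): it normalizes $\Omega$, forms the footprint $\Delta$ of fractional parts, shows that the failure of temperedness forces $\Delta$ to avoid neighborhoods of $0$ and $1$, and then, through a careful analysis of the infima $\lambda=\inf(\Delta\setminus\{0\})$ and $\mu=\inf\{1-\delta\}$ (proving $\lambda=\mu$, that both are attained, and that $\Delta=\{0,\lambda,\dots,(\frac{1}{\lambda}-1)\lambda\}$ with $\frac{1}{\lambda}\in{\mathbb N}$), concludes $\frac{\Omega}{\lambda}\subseteq{\mathbb N}_0$ before invoking Lemma~\ref{l:semigroupgcd}. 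You instead split on whether the subgroup $G=\Omega-\Omega$ of $({\mathbb R},+)$ is discrete or dense, dispatch the discrete case immediately with the same gcd lemma, and in the dense case prove temperedness \emph{directly} by packing the arithmetic progressions $\{ka+(n-k)b\}_{0\leq k\leq n}$ into overlapping intervals $[nb,na]$. The two arguments share their combinatorial kernel --- turning one small difference into a cofinal $\delta$-net of $\Omega$ via nonnegative integer combinations, which the paper uses contrapositively to show $\Delta\cap(0,\varepsilon)=\emptyset$ --- but your architecture outsources the paper's hardest step (the footprint is a finite cyclic group) to the classical dense-or-cyclic classification of subgroups of ${\mathbb R}$, yielding a shorter and non-contradiction-based argument at the cost of importing that standard fact; the paper's version is self-contained and in effect reproves that classification by hand inside $[0,1)$. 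Two small points you should make explicit if you write this up: why every element of the generated group is a single difference of two monoid elements (closure under addition lets you collect positive and negative terms), and why the union of the overlapping progressions, being spaced exactly $\delta$ within each interval $[nb,na]$ with $(n+1)b\leq na$, leaves no gap larger than $\delta$ past the threshold --- both are routine but are the hinges of your dense case.
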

It is obvious that a tempered monoid
with smallest nonzero element equal to $a_1$ is 
$a_1$ times a normalized tempered monoid.
The rest of the section is devoted to proving that 
if an $\omega$-monoid is not a tempered monoid then it is a scalar multiple of a numerical semigroup.

In order to deal with the mathematical properties of an 
$\omega$-monoid, we will analyze its normalized counterpart and examine the behavior of 
the fractional part of all its elements. In this way, we define the {\em footprint} of an $\omega$-monoid $\Omega$ to be
$$\Delta=\left\{\frac{a}{a_1}-\left\lfloor\frac{a}{a_1}\right\rfloor: a\in \Omega\right\},$$
where $a_1$ is the smallest nonzero element of $\Omega$. 
Obviously, $\Delta$ is a subset of the unit interval $[0,1)$. 

We can think of the positive real line as an infinite {\it slinky}
with diameter equal to one. Then each point in the slinky projects
into a point of the circle of diameter one. The set of the projection
of all points in the slinky corresponding to a normalized $\omega$-monoid would then be its footprint. (This illustrative explanation was suggested by one of the anonymous reviewers.)

Obviously, $\Delta$ is a subset of the unit interval $[0,1)$ closed
under addition mod $1$. In particular, we have the following.

\begin{enumerate}
\item If $\delta_1,\delta_2\in \Delta$ and $\delta_1+\delta_2<1$, then $\delta_1+\delta_2\in\Delta$.
\item If $1-\delta_1,1-\delta_2\in \Delta$, and $1-(\delta_1+\delta_2)\geq 0$, then $1-(\delta_1+\delta_2)\in\Delta$.
\item If $\delta_1,1-\delta_2\in \Delta$ then,
  \begin{itemize}
  \item if $\delta_1\geq \delta_2$, then $\delta_1-\delta_2\in\Delta$;
  \item if $\delta_1<\delta_2$, then $1-\delta_2+\delta_1\in\Delta$.
  \end{itemize}
\end{enumerate}

\begin{lemma}\label{l:pretheorem}
If an $\omega$-monoid is not a tempered monoid, then it is a scalar multiple of a numerical semigroup.
\end{lemma}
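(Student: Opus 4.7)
The plan is to normalize $\Omega$ by its smallest positive element $a_1$ and analyze its footprint $\Delta \subseteq [0,1)$, establishing the dichotomy that $\Delta$ is either finite or dense in $[0,1)$. Since a nonzero scalar multiple of a numerical semigroup has an eventually constant positive gap between consecutive elements, it is never tempered; the two cases in the dichotomy correspond respectively to $\Omega$ being a scalar multiple of a numerical semigroup and to $\Omega$ being tempered, so the lemma then follows by contrapositive.

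For the finite case, I would first observe that every element of $\Delta$ must be rational: if some $\delta \in \Delta$ were irrational, then the orbit $\{n\delta \bmod 1\}$, which closure mod $1$ places inside $\Delta$, would already be infinite (indeed dense in $[0,1)$). Letting $N$ be the least common multiple of the denominators of the finitely many elements of $\Delta$, the set $S := (N/a_1)\Omega$ sits inside $\mathbb{N}_0$ and is a submonoid, so Lemma~\ref{l:semigroupgcd} yields that $S/\gcd(S)$ is a numerical semigroup; consequently $\Omega$ is a scalar multiple of a numerical semigroup.

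For the infinite case, I would first show that $\Delta$ must be dense in $[0,1)$. If some $\delta \in \Delta$ is irrational, density is immediate from the classical minimality of irrational rotations on the circle. Otherwise $\Delta \subseteq \mathbb{Q}/\mathbb{Z}$ is infinite; for each $p/q \in \Delta$ in lowest terms, Bézout together with closure mod $1$ places $1/q$ into $\Delta$, and the unboundedness of the denominators produces elements of $\Delta$ arbitrarily close to $0$, whose successive multiples form a dense subset of $[0,1)$. Given $\varepsilon > 0$, I would then pick $0 = \delta_0 < \delta_1 < \cdots < \delta_M < 1$ in $\Delta$ with consecutive gaps and the trailing gap $1 - \delta_M$ all below $\varepsilon/a_1$, and fix for each $\delta_i$ some $(K_i + \delta_i)a_1 \in \Omega$ realizing it. Because $a_1 \in \Omega$, for every $k \geq K := \max_i K_i$ all points $(k + \delta_i)a_1$ belong to $\Omega$, furnishing an $\varepsilon$-dense subsequence of $\Omega$ beyond $K a_1$; any two consecutive elements of $\Omega$ in that region must then differ by less than $\varepsilon$, confirming $a_{n+1} - a_n \to 0$.

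The main technical hurdle is the rational subcase of the density argument: an infinite $\Delta \subseteq \mathbb{Q}/\mathbb{Z}$ might at first glance seem to escape being dense, so one has to extract a genuinely small element by leveraging unbounded denominators together with closure mod $1$. The irrational subcase is cleaner because a single irrational orbit already supplies density. A minor bookkeeping point is to verify at the very end that the exhibited $\varepsilon$-dense subsequence actually controls gaps between \emph{consecutive} elements of $\Omega$ (not merely between consecutive elements of the subsequence), which follows because any element of $\Omega$ strictly between two consecutive subsequence terms only shortens the gaps.
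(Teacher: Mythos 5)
Your proof is correct, but it takes a genuinely different route from the paper's. The paper argues directly from the negation of temperedness: it fixes an $\varepsilon>0$ witnessing infinitely many large gaps, shows the footprint $\Delta$ must avoid $(0,\varepsilon)$ and $(1-\varepsilon,1)$, and then runs a careful infimum argument (proving $\lambda=\inf(\Delta\setminus\{0\})$ equals $\mu=\inf\{1-\delta\}$ and that both are attained) to pin down $\Delta$ as exactly the finite arithmetic set $\{0,\lambda,2\lambda,\dots,(\frac{1}{\lambda}-1)\lambda\}$ with $\frac{1}{\lambda}\in{\mathbb N}$, before invoking Lemma~\ref{l:semigroupgcd}. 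You instead establish the cleaner dichotomy that $\Delta$ is either finite or dense: in the finite case an irrational element is excluded because its orbit under addition mod $1$ would be infinite, so clearing a common denominator reduces to Lemma~\ref{l:semigroupgcd}; in the infinite case you get density either from equidistribution of an irrational rotation or, in the all-rational subcase, by extracting $\frac{1}{q}\in\Delta$ for unboundedly large $q$ from the cyclic group generated by $\frac{p}{q}$, and density of $\Delta$ then forces the consecutive gaps of $\Omega$ to tend to $0$. The common germ of both arguments is the observation that a small positive footprint element propagates, via adding a fixed integer translate, to an eventually $\varepsilon$-dense sub-progression of $\Omega$; the paper uses it negatively (to empty out $\Delta\cap(0,\varepsilon)$) while you use it positively (to prove temperedness). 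Your version is shorter and leans on a standard dynamical fact, and it isolates the finite/dense dichotomy as the real content; the paper's version is more self-contained and extracts sharper structural information about $\Delta$, namely that it is precisely a cyclic subgroup of the circle, at the cost of a longer case analysis.
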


\begin{proof}
We can assume, without loss of generality, that $\Omega$ is normalized.

Let $\Delta$ be the footprint of $\Omega$. If $\Delta\setminus\{0\}=\emptyset$ then we are done since, in this case, $\Omega={\mathbb N}_0$.
Hence we assume that $\Delta\setminus\{0\}\neq\emptyset$.

If $\Omega$ is not a tempered monoid, then there exists $\varepsilon>0$ such that, for any $n$, there exists $N>n$ such that $a_{N+1}-a_N>\varepsilon$. Necessarily, $\varepsilon<1$.

Let us first prove that $\Delta\cap(0,\varepsilon)=\emptyset$.
Indeed, if $\delta\in\Delta\cap(0,\varepsilon)$, then, by
the additive closure of $\Delta\mod{1}$,
the elements $\delta,2\delta,\dots,\lfloor\frac{1}{\delta}\rfloor  \delta$ belong to $\Delta$.
  Hence, there exist integers $q_1,q_2,\dots,q_{\lfloor\frac{1}{\delta}\rfloor}$ such that $q_1+\delta\in \Omega,q_2+2\delta\in \Omega,\dots,q_{\lfloor\frac{1}{\delta}\rfloor}+{\lfloor\frac{1}{\delta}\rfloor}\delta\in \Omega$. Let $q$ be the maximum among $q_1,q_2,\dots,q_{\lfloor\frac{1}{\delta}\rfloor}$.
  Then $q\in \Omega$, $q+\delta\in \Omega$, $q+2\delta\in \Omega$, $\dots$, $q+{\lfloor\frac{1}{\delta}\rfloor}\delta\in \Omega$, and the sequence
  $$\begin{array}{rrrcr}
    q, & q+\delta,&q+2\delta,&\dots,&q+{\lfloor\frac{1}{\delta}\rfloor}\delta,\\
    q+1, & q+1+\delta,&q+1+2\delta,&\dots,&q+1+{\lfloor\frac{1}{\delta}\rfloor}\delta,\\
    q+2, & q+2+\delta,&q+2+2\delta,&\dots,&q+2+{\lfloor\frac{1}{\delta}\rfloor}\delta,\\
    \dots &  &&&\\
  \end{array}$$
    is an unbounded subset of $\Omega$ whose consecutive elements are all at most $\delta<\varepsilon$ apart. This contradicts the choice of $\varepsilon$.
It can be similarly proven that $\Delta\cap(1-\varepsilon,1)=\emptyset$.
Indeed, if $1-\delta\in\Delta\cap(1-\varepsilon,1)$, then,
by the additive closure of $\Delta\mod{1}$, 
the elements $1-\delta,1-2\delta,\dots,1-\lfloor\frac{1}{\delta}\rfloor  \delta$ belong to $\Delta$.
  Hence, there exist integers $q_1,q_2,\dots,q_{\lfloor\frac{1}{\delta}\rfloor}$ such that $q_1+1-\delta\in \Omega,q_2+1-2\delta\in \Omega,\dots,q_{\lfloor\frac{1}{\delta}\rfloor}+1-{\lfloor\frac{1}{\delta}\rfloor}\delta\in \Omega$. Let $q$ be the maximum among $q_1,q_2,\dots,q_{\lfloor\frac{1}{\delta}\rfloor}$.
  Then $q\in \Omega$, $q+1-{\lfloor\frac{1}{\delta}\rfloor}\delta\in \Omega$, $\dots$, $q+1-2\delta\in \Omega$, $q+1-\delta\in \Omega$
and the sequence
$$q, 
q+1-{\left\lfloor\frac{1}{\delta}\right\rfloor}\delta, 
\dots,
q+1-\delta,
    q+1, 
q+2-{\left\lfloor\frac{1}{\delta}\right\rfloor}\delta,
\dots
$$
  is an unbounded subset of $\Omega$ whose consecutive elements are all at most $\delta<\varepsilon$ apart. Again this contradicts the choice of $\varepsilon$.

Let $\lambda=\inf(\Delta\setminus\{0\})$ and
$\mu=\inf\{1-\delta:\delta\in\Delta\setminus\{0\}\}$. We see that
$\lambda=\mu$. By the previous arguments,
$\lambda\geq\varepsilon$ and
$\mu\geq\varepsilon$. If $\lambda>\mu$, take
$\lambda_0\in\Delta$ such that $\lambda\leq
\lambda_0<\lambda+\varepsilon$ and
$\mu_0\in\{1-\delta:\delta\in\Delta\setminus\{0\}\}$ such that
$\mu\leq \mu_0<\lambda$. Then $0<\lambda_0-\mu_0$ and 
$\lambda_0-\mu_0<\lambda+\varepsilon-\mu\leq \lambda$. But, by the additive closure of $\Delta\mod{1}$, we have
$\lambda_0-\mu_0\in\Delta$.
This contradicts $\lambda$ being the infimum of $\Delta\setminus\{0\}$.
On the other hand, if $\lambda<\mu$, take
$\lambda_0\in\Delta$ such that $\lambda\leq \lambda_0<\mu$ and
$\mu_0\in\{1-\delta:\delta\in\Delta\setminus\{0\}\}$ such that
$\mu\leq \mu_0<\mu+\varepsilon$. Then
$1-\mu_0+\lambda_0<1-\mu+\mu=1$ and also
$1-\mu_0+\lambda_0>1-\mu-\varepsilon+\lambda\geq 1-\mu>0$.
Now, by the additive closure of $\Delta\mod{1}$, 
it follows that $1-\mu_0+\lambda_0\in\Delta$. Hence,
$\mu_0-\lambda_0\in\{1-\delta:\delta\in\Delta\setminus\{0\}\}$, but 
$\mu_0-\lambda_0<\mu+\varepsilon-\lambda\leq \mu$, a contradiction with $\mu$ being the 
infimum of $\{1-\delta:\delta\in\Delta\setminus\{0\}\}$.
We thus conclude that $\lambda=\mu$.

Let us see now that $\lambda$ and $1-\mu$ belong to
$\Delta\setminus\{0\}$. Suppose first that $\lambda$ does not belong
to $\Delta\setminus\{0\}$. Then it must be an accumulation point of
$\Delta\setminus\{0\}$ and so there exists $\lambda_0\in \Delta\setminus\{0\}$ with $\lambda<\lambda_0<2\lambda$. There also
exists $\mu_0\in\{1-\delta:\delta\in\Delta\setminus\{0\}\}$ such that
$\mu\leq \mu_0 <\lambda_0$. By 
the additive closure of $\Delta\mod{1}$,
it follows that
$\lambda_0-\mu_0\in\Delta$, but $0<\lambda_0-\mu_0<\lambda$, a
contradiction. Suppose now that $1-\mu$ does not belong to
$\Delta\setminus\{0\}$. Then $\mu$ must be an accumulation point of
$\{1-\delta:\delta\in\Delta\setminus\{0\}\}$ and so there exists
$\mu_0\in \{1-\delta:\delta\in\Delta\setminus\{0\}\}$ with
$\mu<\mu_0<2\mu$. There also exists $\lambda_0\in\Delta\setminus\{0\}$
such that $\lambda\leq \lambda_0 <\mu_0$. By 
the additive closure of $\Delta\mod{1}$, it follows that
$1+\lambda_0-\mu_0\in\Delta$, but $1>1+\lambda_0-\mu_0>1-\mu$, a contradiction.

Now, since $\lambda,1-\mu\in\Delta\setminus\{0\}$, we have
$\{\lambda,2\lambda,3\lambda,\dots\}\cap(0,1)\subseteq\Delta$ and
$\{1-\mu,1-2\mu,1-3\mu,\dots\}\cap(0,1)\subseteq\Delta$.
On the other hand,
$\Delta\setminus\{\lambda,2\lambda,3\lambda,\dots\}\cap(0,1)=\emptyset$
because, otherwise, if there exists $\delta\in\Delta$ such that
$k\lambda<\delta<(k+1)\lambda$ for some integer $k$, then
$\delta-k\mu\in\Delta$ by 
the additive closure of $\Delta\mod{1}$. Now, $0<\delta-k\mu<\lambda$, which contradicts that $\lambda$ is the minimum in $\Delta\setminus\{0\}$. Furthermore, $1=k\lambda$ for some integer $k$ since, otherwise, the element $1-\mu\in\Delta$ would be in $\Delta\setminus\{\lambda,2\lambda,3\lambda,\dots\}\cap(0,1)$, which we proved to be empty.
Consequently, $$\Delta=\left\{0,\lambda,2\lambda,\dots,\left(\frac{1}{\lambda}-1\right)\lambda\right\}.$$ 
Then $\Omega\subseteq (\Delta+{\mathbb N}_0)$ and so $\frac{\Omega}{\lambda}\subseteq {\mathbb N}_0$. Now, by Lemma~\ref{l:semigroupgcd}, $\frac{\Omega}{\lambda\gcd\left(\frac{\Omega}{\lambda}\right)}$ is a numerical semigroup.

Hence, $\Omega$ is $\lambda\gcd\left(\frac{\Omega}{\lambda}\right)$ times the numerical semigroup $\frac{\Omega}{\lambda\gcd\left(\frac{\Omega}{\lambda}\right)}$.
\end{proof}

\section{The minimal generating set of an $\omega$-monoid.}\label{s:gen}

For an $\omega$-monoid we can consider minimal generating sets, so that finite sums of their elements generate the monoid. 
If $\Omega$ is an $\omega$-monoid, we denote by $\Omega^*$ the set of its nonzero elements. 
It is easy to check that the set $\Omega\setminus (\{0\}\cup(\Omega^*+\Omega^*))$ generates $\Omega$ and that it is minimal.
Observe that, as opposed to other algebraic structures, for $\omega$-monoids the minimal generating set is unique.
The unique minimal generating set of $\Omega$ will be denoted ${\mathcal G}(\Omega)$.

\begin{remark}\label{r:sfinite}The minimal generating set of a numerical semigroup is always finite. Indeed, all the generators except the multiplicity $m$ are included in the
set
  formed by the minimum element of the semigroup belonging to each class modulo $m$ of the semigroup, and which is obviously finite. 
\end{remark}
  
\begin{example}
  For the numerical semigroup
  $S=\{0,4,5,8,9,10\}\cup\{i\in{\mathbb N}: i\geq 12\}$
  in Example~\ref{e:hermite}, ${\mathcal G}(S)=\{4,5\}$.
\end{example}\begin{example}
  For the numerical semigroup
  $$H = \{0, 12, 19, 24, 28, 31, 34, 36, 38, 40, 42, 43, 45, 46, 47, \dots\}$$
  in Example~\ref{ex:H}, ${\mathcal G}(H)=\{12, 19, 28, 34, 42, 45, 49, 51\}$.
\end{example}

\begin{example}
  For the tempered monoid $Q$ in Example~\ref{e:quarterstempered},
  ${\mathcal G}(Q)=\{1, 1+\frac{1}{4},1+\frac{1}{2},1+\frac{3}{4},
2+\frac{1}{8},2+\frac{3}{8},2+\frac{5}{8},2+\frac{7}{8},3+\frac{1}{16},3+\frac{3}{16},3+\frac{5}{16},3+\frac{7}{16},3+\frac{9}{16},3+\frac{11}{16},3+\frac{13}{16},3+\frac{15}{16},\dots\}$.
\end{example}\begin{example}
  For the logarithmic monoid $L$ (Example~\ref{ex:fractempered}),
  ${\mathcal G}(L)=\{\log_2(i+1):i\in{\mathbb N}$ and $i+1$ is prime$\}.$
\end{example}

\begin{example}
  For the Pythagorean monoid $P$ (Example~\ref{ex:pitagoras}),
    ${\mathcal G}(P)=\{1,\log_2(3)\}.$
\end{example}

We say that a set is {\em commensurable} if for any pair $\lambda,\mu$ of nonzero elements in the set the fraction $\lambda/\mu$ is rational.
Among all the minimal generating sets of the previous examples we have a variety of sizes and commensurabilities. Indeed,

\begin{itemize}
\item ${\mathcal G}(S)$ and ${\mathcal G}(H)$ are finite and commensurable,
\item ${\mathcal G}(P)$ is finite and not commensurable,
\item ${\mathcal G}(Q)$ is infinite and commensurable,
\item ${\mathcal G}(L)$ is infinite and not commensurable.
\end{itemize}

Our last theorem gives a way to differentiate those $\omega$-monoids that are scalar multiples of numerical semigroups from those that are tempered monoids in terms of the size and commensurability of their minimal generating sets.

\begin{theorem}
\label{t:characterizebygeneratingset}
  Suppose that $\Omega$ is an $\omega$-monoid with minimal generating set ${\mathcal G}(\Omega)$.
  \begin{enumerate}
  \item If ${\mathcal G}(\Omega)$ is infinite, then $\Omega$ is a tempered monoid.
  \item If ${\mathcal G}(\Omega)$ is finite, then $\Omega$ is a tempered monoid if and only if ${\mathcal G}(\Omega)$ is not commensurable.
  \end{enumerate}
\end{theorem}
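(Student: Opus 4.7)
The plan is to leverage Theorem~\ref{t:main} (every $\omega$-monoid is either a scalar multiple of a numerical semigroup or a normalized tempered monoid) and reduce the statement to the single equivalence: \emph{$\Omega$ is a scalar multiple of a numerical semigroup if and only if $\mathcal{G}(\Omega)$ is finite and commensurable.} I first observe that the two classes in Theorem~\ref{t:main} are disjoint: if $\Omega=\lambda S$ with $S$ a numerical semigroup and $\lambda>0$, then the cofiniteness of $S$ in $\mathbb{N}_0$ forces the consecutive differences of $\Omega$ to be eventually equal to $\lambda$, contradicting the defining limit condition of a tempered monoid. Granted the equivalence above, the theorem is immediate: if $\mathcal{G}(\Omega)$ is infinite it cannot be finite and commensurable, so $\Omega$ must be a tempered monoid; if $\mathcal{G}(\Omega)$ is finite, then $\Omega$ is a scalar multiple of a numerical semigroup iff $\mathcal{G}(\Omega)$ is commensurable, which is exactly the statement that $\Omega$ is tempered iff $\mathcal{G}(\Omega)$ is not commensurable.

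For the forward direction of the equivalence, suppose $\Omega=\lambda S$ for some numerical semigroup $S$ and some $\lambda>0$. Since the defining condition $a\in\Omega\setminus(\{0\}\cup(\Omega^*+\Omega^*))$ is preserved under scaling (because $a=b+c$ in $\Omega$ iff $\lambda a=\lambda b+\lambda c$ in $\lambda\Omega$), the minimal generating sets satisfy $\mathcal{G}(\Omega)=\lambda\,\mathcal{G}(S)$. By Remark~\ref{r:sfinite}, $\mathcal{G}(S)$ is finite and contained in $\mathbb{N}$, so $\mathcal{G}(\Omega)$ is finite and every ratio of two nonzero elements lies in $\mathbb{Q}$.

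For the reverse direction, assume $\mathcal{G}(\Omega)=\{g_1,\ldots,g_n\}$ is finite and commensurable. Writing each $g_i/g_1$ as a positive rational and choosing a common denominator $D$, set $\lambda=g_1/D$, so that $q_i:=g_i/\lambda$ is a positive integer for every $i$. Because $\mathcal{G}(\Omega)$ generates $\Omega$ as a monoid, we obtain $\Omega=\lambda T$, where $T=\langle q_1,\ldots,q_n\rangle$ is a submonoid of $\mathbb{N}_0$ satisfying hypotheses (i)--(ii) of Lemma~\ref{l:semigroupgcd}. That lemma then yields $T/\gcd(T)$ as a numerical semigroup, whence $\Omega=\lambda\gcd(T)\cdot\bigl(T/\gcd(T)\bigr)$ is a scalar multiple of a numerical semigroup. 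The only slightly delicate point in the whole argument is the identity $\mathcal{G}(\lambda S)=\lambda\,\mathcal{G}(S)$ in the forward direction; the main conceptual step is the reverse direction, where extracting a common scale $\lambda$ from a commensurable finite generating set is precisely what makes Lemma~\ref{l:semigroupgcd} applicable. Everything else is bookkeeping.
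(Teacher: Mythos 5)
Your proposal is correct and follows essentially the same route as the paper: reduce everything to Theorem~\ref{t:main}, dispose of the infinite case via Remark~\ref{r:sfinite}, and in the finite case show that commensurability is exactly what allows a common scaling factor $\alpha$ (or $1/\lambda$) sending $\Omega$ into ${\mathbb N}_0$, after which Lemma~\ref{l:semigroupgcd} produces the numerical semigroup. Your explicit remarks that the two alternatives of Theorem~\ref{t:main} are disjoint and that ${\mathcal G}(\lambda S)=\lambda\,{\mathcal G}(S)$ are points the paper leaves implicit, and they tighten the ``only if'' direction rather than change the argument.
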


\begin{proof}
  \begin{enumerate}
  \item The statement follows from Remark~\ref{r:sfinite}.
  \item On one hand, suppose that there exist $\lambda,\mu\in{\mathcal G}(\Omega)$ such that $\lambda/\mu$ is not rational. By Theorem~\ref{t:main}, to prove that $\Omega$ is a tempered monoid, it is enough to prove that there does not exist any real number $\alpha$ such that $\alpha\lambda$ and $\alpha\mu\in{\mathbb N}$. Indeed, suppose that one such $\alpha$ exists.
    Then $\frac{\lambda}{\mu}=\frac{\alpha\lambda}{\alpha\mu}$ is rational, contradicting the hypothesis. Hence, if ${\mathcal G}(\Omega)$ is not commensurable then $\Omega$ is a tempered monoid.

    On the other hand, suppose that
    ${\mathcal G}(\Omega)=\{g_1,\dots,g_n\}$ and that all fractions $\frac{g_i}{g_j}$ are rational, for $1\leq i,j\leq n$. In particular, there exist $\{a_2,a_3,\dots,a_n\}\subseteq{\mathbb N}$ and $\{b_2,b_3,\dots,b_n\}\subseteq{\mathbb N}$ such that $\frac{g_1}{g_i}=\frac{a_i}{b_i}$ for $2\leq i\leq n$ and so
    $g_i=g_1\frac{b_i}{a_i}$.
    Take $\alpha=\frac{a_2\cdot a_3\cdot\dots\cdot a_n}{g_1}$.
    Then it is easy to check that $\alpha g_1\in{\mathbb N}$ and that $\alpha g_i\in{\mathbb N}$ for $2\leq i\leq n$. Consequently $\alpha \Omega\subseteq {\mathbb N}_0$ and so $\Omega$ is a scalar multiple of a numerical semigroup.
\end{enumerate}
  \end{proof}

In \cite{Gottiatomic} Gotti proves that a Puiseux monoid, that is a
submonoid of ${\mathbb Q}_+$, is isomorphic to a numerical monoid if
and only if its generating set is finite.
Since any finite subset of ${\mathbb Q}$ is commensurable, Theorem~\ref{t:characterizebygeneratingset} would establish Gotti's result for the case when the
Puiseux monoid is increasingly enumerable.

\paragraph{Acknowledgments.}
The author was supported by the Spanish government under grant TIN2016-80250-R and by the Catalan government under grant 2014 SGR 537. She would like to acknowledge the interesting conversations with Pilar Bayer, Julio Fernández, Shalom Eliahou, Felix Gotti, Marly Cormar, Scott Chapman, and Alfons Reverté, the constructive and stimulating comments of the anonymous reviewers, and the patience of the editor Susan Jane Colley.

\bibliographystyle{plain}



\end{document}